\newtheorem{thm}{Theorem}[section]
\newtheorem{defn}[thm]{Definition}
\newtheorem{lemma}[thm]{Lemma}
\newtheorem{prop}[thm]{Proposition}
\theoremstyle{remark}
\newtheorem{rmk}[thm]{Remark}
\newcommand{\bC}{\mathbb{C}}
\newcommand{\bL}{\mathbb{L}}
\newcommand{\bR}{\mathbb{R}}
\newcommand{\bV}{\mathbb{V}}
\newcommand{\bZ}{\mathbb{Z}}
\newcommand{\cC}{\mathcal{C}}
\newcommand{\cE}{\mathcal{E}}
\newcommand{\cF}{\mathcal{F}}
\newcommand{\cG}{\mathcal{G}}
\newcommand{\cO}{\mathcal{O}}
\newcommand{\cN}{\mathcal{N}}
\newcommand{\red}{\mathrm{red}}
\newcommand{\vir}[1]{[#1]^{\mathrm{vir}}}
\DeclareMathOperator{\sym}{Sym}
\DeclareMathOperator{\spec}{Spec}
\DeclareMathOperator{\id}{\mathrm{Id}}
\newcommand{\gm}{\mathbb{G}_\text{m}}
\newcommand{\f}{\mathrm{f}}
\newcommand{\m}{\mathrm{m}}
\newcommand{\ob}{\mathrm{ob}}
\newcommand{\perf}{\mathbf{Perf}}
\newcommand{\Pic}{\mathbf{Pic}}
\newcommand{\qcoh}{D_\mathrm{qcoh}}
\newcommand{\coh}{D_\mathrm{coh}}
\newcommand{\qc}{\mathrm{QCoh}}
\newcommand{\Hom}{\mathrm{Hom}}
\title{Virtual classes of $\gm$-gerbes}
\author{F. QU}
\email{fengquest@gmail.com}
\thanks{The author is supported by NSFC 
Young Scientists Fund 11801185}
\subjclass[2010]{Primary:14C17; Secondary:14N35}
\begin{document}
\maketitle
\begin{abstract}

We show that a perfect obstruction theory for a $\gm$-gerbe determines a semi-perfect obstruction theory for its base, which is perfect if the gerbe is quasi-compact and affine-pointed.  These results streamline the construction of a semi-perfect obstruction theory for the base, and allow us to relate virtual classes of the gerbe and its base.
\end{abstract}
\section{Introduction}

We discuss virtual classes of $\gm$-gerbes with perfect obstruction theories.
These gerbes appear naturally in the Donaldson-Thomas (DT) theory of 
smooth projective 3-folds. As moduli stacks of Bridgeland stable objects, $\gm$-gerbes are used to
 extract invariants when there are no strictly semi-stable objects.
 See e.g., \cite{BLMNPS} and references therein.
 In the presence of strictly semi-stable objects, the approach using virtual cycles to define
 generalized DT invariants is to associate
 some $\gm$-gerbe  intrinsically to
 a (derived) moduli stack of Bridgeland semi-stable objects. See e.g., \cite{KLS}. 

Recently virtual classes of Artin
 stacks are defined unconditionally (\cite{AP, Kh}) using higher categorical ingredients.
$\gm$-gerbes are probably the simplest Artin stacks, and their virtual classes can also be treated using the more classical approach of \cite{Po}, thus examining their virtual cycles is a natural step to take towards understanding examples.

Consider a $\gm$-gerbe $\cG$ with an absolute perfect obstruction theory over a DM stack $B$. The main observation is that
after truncating its perfect obstruction theory from $[-1,1]$ to $[-1,0]$, we can decompose 
the truncation into
moving and fixed parts. 
The moving part is given by a locally free sheaf of finite rank $H$ in degree $-1$, and the fixed part
determines a semi-perfect obstruction theory for $B$.
The virtual class of $\cG$ is obtained by pulling back the virtual class of $B$ then cap with the
Euler class of the vector bundle associated to $H$. When $B$ is quasi-compact and affine-pointed, the semi-perfect obstruction theory for $B$ is actually a perfect obstruction theory.

Obstruction theories for $B$ have been constructed in \cite{CL, HT}, and our results
come from efforts to formulate those constructions using the perfect obstruction theory 
for $\cG$. These results are not hard to prove, 
and details can be found in Section 3.
The two key ingredients recalled in Section 2.2 are decomposition
of quasi-coherent sheaves on $\cG$ into direct summands indexed by the characters of $\gm$ and the equivalence between the derived category of complexes of $\cO_\cG$-modules with quasi-coherent cohomology sheaves and 
the derived category of
quasi-coherent sheaves.

As an application, at the end of Section 3 we remark that two choices of
fixing determinant of perfect complexes produce the same semi-perfect obstruction theory.

\section{Preliminaries}
We work over the field of complex numbers $\bC$.
\subsection{Notation}
For an algebraic stack $X$, $\mathrm{QCoh}(X)$ denotes the abelian category of quasi-coherent sheaves on $X$, $D(\qc(X))$ its derived category,
$\qcoh(X)$ the derived category of $\cO_X$-modules with quasi-coherent cohomology sheaves, and 
$\coh(X)$ the derived category of $\cO_X$-modules with coherent cohomology sheaves. 
Here sheaves of $\cO_X$-modules are defined on the lisse-\'etale site of $X$.

For derived categories, superscripts are used to further specify
the range of cohomology sheaves.
The truncation functor for complexes is denoted by $\tau$, with a superscript to indicate the range of a truncation.
A complex is perfect in $[a, b]$ if it is locally (in the lisse-\'etale topology) quasi-isomorphic to a complex of locally free sheaves of finite rank in degrees $[a,b]$.
The derived pullback of derived category objects along a map $f$ is also denoted by $f^*$. The superscript $\vee$ denotes taking dual.

\subsection{Quasi-coherent sheaves on $\gm$-Gerbes and  their derived categories}

Let $B$ be a DM stack  locally of finite type over $\bC$, and $p\colon \cG \to B$ a $\gm$-gerbe over $B$.  Any quasi-coherent  sheaf $F$ on $\cG$ has  a decomposition 
$F=\oplus_{ i \in \bZ} F_i$ where $F_i$ has weight $i$.(See e.g., \cite[Proposition 2.2.1.6]{Li2}.)

\begin{rmk}
On the trivial gerbe $U \times B\gm$, 
a quasi-coherent sheaf with weight $i$ is of the form
$F\boxtimes \cO(i)$,where $F$ is a quasi-coherent sheaf on $U$
and  $\cO(i)$ the line bundle on $B\gm$ induced by
the character of $\gm$ with weight $i$.
\end{rmk}

Following \cite{GP}, we  call $F_0$ the fixed part of $F$ and  $\oplus_{i \ne 0} F_i$ the moving part, denoted by $F^\f$ and $F^\m$ respectively.

We have a  decomposition of abelian categories
\[
\qc(\cG) \simeq \qc(\cG)^\f \times  \qc(\cG)^\m,
\]
where 
$\qc(\cG)^\f$
(resp.
$\qc(\cG)^\m$) is the full subcategory of quasi-coherent sheaves with only fixed (resp. moving) part.
The pushforward $p_*$ induces an equivalence \[\mathrm{QCoh}(\cG)^\f \simeq\mathrm{QCoh}(B)\] with inverse $p^*$.

For an algebraic stack $X$, the inclusion map 
$\mathrm{QCoh}(X) \to \mathrm{Mod}(\cO_X)$ from the category of quasi-coherent sheaves on $X$ into the category of $\cO_X$-modules induces a map between derived categories.
\begin{prop}[{\cite[Theorem C.1]{HNR}}]
Let $X$ be an algebraic stack. If $X$ is either quasi-compact with affine diagonal or noetherian and affine-pointed, then the natural map between derived categories
$D^+(\mathrm{QCoh}(X)) \to \qcoh^+(X)$ is an equivalence. 
\end{prop}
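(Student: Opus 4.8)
This is \cite[Theorem C.1]{HNR}, so I only outline the strategy. The plan is to reduce the asserted equivalence to a comparison of $\mathrm{Ext}$-groups between quasi-coherent sheaves, and then to prove that comparison by cohomological descent along a well-chosen smooth atlas.

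First I would observe that the inclusion $\iota\colon\qc(X)\to\mathrm{Mod}(\cO_X)$ is exact, so the induced functor $\Phi\colon D^+(\qc(X))\to\qcoh^+(X)$ is well defined (complexes of quasi-coherent sheaves have quasi-coherent cohomology), is $t$-exact for the standard $t$-structures, and on hearts is the canonical equivalence with $\qc(X)$. Once $\Phi$ is known to be fully faithful, essential surjectivity follows by a standard d\'evissage: given $E\in\qcoh^+(X)$, one constructs a complex of quasi-coherent sheaves quasi-isomorphic to $E$ by induction on its amplitude, using the truncation triangles $\tau^{\le n-1}E\to\tau^{\le n}E\to H^n(E)[-n]$ and full faithfulness to realise the connecting maps by honest morphisms of complexes. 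Full faithfulness of $\Phi$ in turn reduces, by a standard spectral-sequence argument (both $\Hom$-groups in question being assembled from $\mathrm{Ext}$-groups of cohomology sheaves), to the bijectivity of
\[
\mathrm{Ext}^i_{\qc(X)}(F,G)\longrightarrow\mathrm{Ext}^i_{\cO_X}(F,G),\qquad i\ge 0,
\]
for all quasi-coherent sheaves $F,G$ on $X$ --- that is, to the statement that $\mathrm{Ext}$ computed in the abelian category $\qc(X)$ agrees with $\mathrm{Ext}$ of $\cO_X$-modules on the lisse-\'etale site. This comparison is the whole content of the proposition.

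To establish it I would treat the two hypotheses separately. If $X$ is quasi-compact with affine diagonal, choose a smooth surjection $U\to X$ with $U$ an affine scheme; then each term $U_n=U\times_X\cdots\times_X U$ of the \v{C}ech nerve $U_\bullet$ is affine, being a base change of the affine diagonal along $U^{n+1}\to X\times X$, and on an affine scheme the $\mathrm{Ext}$-comparison is classical. I would then compare the two cohomological descent spectral sequences associated with $U_\bullet$ --- one converging to $\mathrm{Ext}$ in $\qc(X)$, the other to $\mathrm{Ext}$ of $\cO_X$-modules --- observe that their $E_1$-pages coincide term by term, and conclude that the abutments agree; quasi-compactness is what guarantees convergence. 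If instead $X$ is noetherian and affine-pointed, there need be no affine diagonal, so the \v{C}ech nerve of an affine atlas need not consist of affine schemes; here I would run a noetherian induction on reduced closed substacks of $X$, using that affine-pointedness makes a suitable atlas over a generic point affine, so that the generic contribution is controlled, and then inducting on the complementary closed substack, in the spirit of the usual d\'evissage for (quasi-)coherent sheaves on noetherian stacks.

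The step I expect to be the main obstacle is precisely the input underlying this $\mathrm{Ext}$-comparison. The lisse-\'etale site of $X$ is not functorial in the naive sense, injective objects of $\qc(X)$ need not be acyclic for $\cO_X$-module cohomology, and one has to establish with care that the relevant descent spectral sequences have the claimed $E_1$-terms and actually converge. Sorting this out is the technical heart of \cite{HNR}, and the two hypotheses in the statement are exactly what make it go through.
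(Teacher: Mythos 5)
The paper itself offers no proof of this proposition: it is cited directly as \cite[Theorem C.1]{HNR}, so there is no in-paper argument to compare your sketch against, and your opening remark that you are only outlining the strategy of \cite{HNR} is the operative point. Your reduction of the equivalence to a comparison of $\mathrm{Ext}$-groups between quasi-coherent sheaves, via $t$-exactness on hearts and d\'evissage for essential surjectivity, is a sound framing of what has to be proved.

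That said, your outline is not how \cite{HNR} actually argue, and the part you yourself flag as the main obstacle is where the sketch has a real gap. Their proof is organized around the quasi-coherator $Q_X$, the right adjoint to $\qc(X)\hookrightarrow\mathrm{Mod}(\cO_X)$, and the central technical input is that under either hypothesis an injective object of $\qc(X)$ remains acyclic for $R\Gamma(U,-)$ for every affine $U$ smooth over $X$; this is what makes $RQ_X$ an inverse to the comparison functor on bounded-below objects. Your \v{C}ech cohomological-descent spectral sequence along an affine atlas is a plausible alternative heuristic in the quasi-compact-with-affine-diagonal case, since there the \v{C}ech nerve is indeed made of affines, though one still has to address the non-functoriality of the lisse-\'etale topos that you mention. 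For the noetherian affine-pointed case, however, your plan misreads the hypothesis: affine-pointedness is a condition on field-valued points $\spec k \to X$, not a statement that some atlas becomes affine over a generic point, and the proposed noetherian induction on closed substacks does not obviously get off the ground. In \cite{HNR} that case instead rests on structural results about injective objects of $\qc(X)$ on noetherian stacks, in the lineage of Gabber's theorem that quasi-coherent injectives on noetherian schemes are injective as $\cO_X$-modules. None of this bears on the paper under review, which uses the result as a black box in the proof of Theorem~\ref{pot}.
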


Recall $X$ is affine pointed if for every morphisms $\spec k \to X$ from a field $k$ to $X$ is affine. For instance, if $X$ has affine diagonal, then it is affine-pointed.

\subsection{Perfect obstruction theory (POT)}

\begin{defn}[\cite{BF1, No,Po}]
Let $f \colon X \to  Y$ be a map  between algebraic stacks locally  of finite type over $\bC$,  an obstruction theory for $f$ is  a  map $\phi\colon E^\bullet  \to L_f$ in $\qcoh^{\le 1}(X)$ such that  $h^0(\phi), h^1(\phi)$ are isomorphisms and $h^{-1}(\phi)$ surjective in 
$\mathrm{QCoh}(X)$. Here $L_f=\tau^{\ge -1}\bL_f $ is the truncated cotangent complex, with $\bL_f \in D_{\mathrm{coh}}^{\le 1}(X)$ being the cotangent complex of $f$.

The obstruction theory $\phi$ is perfect if $E^\bullet$ is perfect in $[-1,1]$.

The obstruction sheaf $\ob_\phi$ is defined as $h^1({E^\bullet}^\vee)$.
\end{defn}

\begin{rmk}
 For a summary of $\bL_f$, see e.g., \cite[2.4]{AOV}. 
 POTs defined using $\bL_f$ in place of $L_f$ can be truncated to $[-1,1]$ and give rise to POTs defined above. 
 
We use $L_f$ in this paper for technical reasons. In Lemma \ref{van},  to ensure certain vanishings we need that $L_B$ belongs to $\qcoh^{\ge -1}(\cG)$.  And in the proof of 
Theorem \ref{pot}, the assumptions there allows us to replace
 $\qcoh^{\le 1}(X)$ by $D^{\le 1}(\qc(X))$.
\end{rmk}

When the map $f$ is DM, $L_f \in \coh^{[0,1]}(X)$, and a POT for $f$ induces a closed embedding
of the intrinsic normal sheaf $\cN_f=h^1/h^0(L_f^\vee)$ 
\footnote{Here $L_f$ also denote its extension to the big  fppf site, see \cite[Section 2]{BF1}.
} 
into the vector bundle stack 
$\cE_f=h^1/h^0({E^\bullet}^\vee)$
(\cite[Theorem 4.5]{BF1}
\cite[Theorem 2.3]{Po}).
The intrinsic normal cone $\cC_f$ is a closed substack of $\cN_f$, and 
the virtual class is then defined  by intersecting  $\cC_f$
viewed as a cycle of $\cE_f$ with its zero section.

\begin{rmk}
When $f$ is not DM, similar to \cite{BF1}, $L_f$ determines  a Picard 2-stack $\cN_f$, 
 and a POT $\phi$ induces a closed embedding of $\cN_f$ into the vector bundle 2-stack associated to $E^\bullet$ (\cite{AP, Kh}). To define a virtual class, 
we only need a closed embedding 
$
\pi_0(\cN_f) \to \pi_0(\cE_f)
$
between their coarse sheaves (See e.g., \cite[Section 2]{Be}), this observation goes back to 
\cite{LT} and is recasted into semi-perfect obstruction theory. In particular, only the truncation of $\phi$  to $[-1,0]$ should matter, and this is the earlier approach of \cite{No,Po}.
\end{rmk}

\begin{defn}[\cite{CL}]
 Let $X$ be a DM stack over a pure dimensional base $S$, a semi-perfect obstruction theory for $X$ over $S$ consists of a collection of \'etale locally defined POTs  $(U_i, \phi_i)$, where $\{U_i\}$ is an \'etale cover of $X$ and $\phi_i \colon E_i^\bullet \to L_{U_i/S}$ is a POT for $U_i$,  and they satisfy the following  conditions (1) and (2).
\begin{enumerate}
\item the local obstruction sheaves $\ob_i=h^1({E_i^\bullet}^\vee)$ are isomorphic over $U_{ij}$
and descend to an obstruction sheaf $\ob$ on $X$.
\item
the restrictions of $\phi_i$ and $\phi_j$ to $U_{ij}$ give the same obstruction assignment under the identification of local obstruction sheaves in $(1)$.
\end{enumerate}
\end{defn}

By the proof of \cite[Proposition 2.1]{CL}, 
an equivalent form of Condition (2) is the following condition (2').
For any closed point $x\colon \spec\bC \to X$, there exists a well-defined map
\[
 h^1((x^*L_{X/S})^\vee) \to x^*\ob
\] 
obtained using 
a factorization of $x$ as the composition of $y \colon \spec\bC \to U_i$ and  $U_i \to X$.
For any choice of $y$, the POT $\phi_i$ induces a map 
\[
h^1((y^*\phi_i)^\vee) \colon h^1((y^*L_{U_i/S})^\vee) \to h^1((y^*{E^\bullet_i})^\vee) \simeq y^*\ob_i,
\] which can be identified as a map
\[
 h^1((x^*L_{X/S})^\vee) \to x^*\ob.
\]

 \begin{rmk}
Conditions (1) and (2) are related to maps between coarse sheaves. The locally defined POT $\phi_i$ corresponds to a closed imbedding
$\cN_i \to \cE_i$, 
where $\cN_i$ denotes
the intrinsic normal sheaf of $U_i/S$, and
$\cE_i$ the vector bundle stack associated with $E_i^\bullet$.
We have induced maps between coarse sheaves
\[
\pi_0(\cN_i) \to \pi_0(\cE_i) \simeq \ob_i.
\footnote{
Here $\ob_i$ denote its extension to the big \'etale site of $U_i$ as in \cite{Be}.
}
\] 
Denote $\cN_i^\red$ the reduced stack associated to $\cN_i$, the composition of  the closed embedding $\cN^\red_i \to \cN_i$ with
the embedding  $\cN_i \to \cE_i$ induces 
\[
\pi_0(\cN_i^\red) \to \pi_0(\cE_i) \simeq \ob_i.
\]
It is clear from the proof of \cite[Proposition 2.1]{CL}
that 
if the maps $\pi_0(\cN_i) \to  \ob_i
$ descend to $\pi_0 (\cN_{X/S}) \to \ob
$, then conditions (1) and (2) are satisfied, and together (1) and (2) imply that the maps
$
\pi_0(\cN_i^\red) \to \ob_i
$
 descend to 
$
\pi_0 (\cN_{X/S}^\red) \to \ob
$. 

\begin{rmk}
The closed imbedding $\pi_0 (\cN_{X/S}^\red) \to \ob$ is used to construct the virtual cycle. It
determines a map between Chow groups
$A_*(\cN_{X/S}) \simeq A_*(\cN_{X/S}^\red) \to A_*(\ob)$.
The virtual class is obtained by pushing forward the cycle 
determined by the intrinsic normal cone $C_{X/S}$ inside $\cN_{X/S}$ to a cycle class of the coherent sheaf stack $\ob$, then Gysin pullback to $X$. See \cite{CL} for the original construction, and \cite{Lee} from the virtual pullback viewpoint.
\end{rmk}

\end{rmk}

\begin{defn}[\cite{KL1,Ji}]
Let $X$ be a DM stack over $\bC$, a semi-perfect obstruction theory for $X$ over $\bC$ is symmetric if its local POTs  $\phi_i$ are symmetric (\cite{Be,BF2}) and the induced isomorphisms $\ob_i \simeq \Omega_{U_i}$ descend to
an isomorphism $\ob \simeq \Omega_X$.

\end{defn}
\begin{rmk}
Symmetric semi-perfect obstruction theory is introduced first for analytic spaces in \cite{KL1}, its adaptation to DM stacks appeared in \cite{Ji}.
\end{rmk}

 For any \'etale map $U \to X$, the pullback of a POT $\phi$ for $X$ over $S$ to $U$ determines a POT for $U$ over  $S$. 
 In this way,  a POT $\phi$ for $X/S$ induces a semi-perfect obstruction theory.

\begin{rmk}
Determining whether a semi-perfect obstruction theory comes from a
POT requires additional information.
If a semi-perfect obstruction theory is induced from a POT on $X$ then the vector bundle stacks
$\cE_i=h^1/h^0({E_i^\bullet}^\vee)$
 descend to $X$.
 In general, the descent data for $\cE_i$ include an isomorphism
 $a_{ij}\colon {\cE_i}_{|_{U_{ij}}} \simeq {\cE_j}_{|_{U_{ij}}}$ on each $U_{ij}$, a two arrow 
 $b_{ijk}$ between ${a_{ik}}_{|_{U_{ijk}}}$ and ${a_{ij}}_{|_{U_{ijk}}} \circ {a_{jk}}_{|_{U_{ijk}}}$ on each $U_{ijk}$, and compatibilities between $\{b_{ijk}\}$ on each $U_{ijkl}$. In terms of complexes, $b_{ijk}$ correspond to chain homotopies, which are invisible in the derived category. 
\end{rmk}

\subsection{Comparison between Gysin pullbacks along zero sections}
We recall a standard result comparing Gysin pullbacks along zero sections
of vector bundle stacks in an exact sequence.

Let $Y$ be an algebraic stack of finite type over $\bC$, and $f\colon \cE \to \cE''$ a smooth surjective map between vector bundle stacks over $Y$. Let  $\cE'$ be the kernel of $f$, so that we
have a cartesian diagram
\begin{equation}\label{cart}
\xymatrix{
\cE' \ar[r]\ar[d]  & Y\ar[d]^{0_{\cE''}}\\
\cE \ar[r]^f        & \cE'',
}
\end{equation}
where $0_{\cE''}$ is the zero section of $\cE''$.
When $\cE''$ is a vector bundle, the map  $0_{\cE''}$ is  a closed embedding, then
$\cE' \to \cE$ is also a closed embedding.

\begin{lemma} \label{top}
Assume $\cE''$ is an vector bundle
and $Y$ is stratified by global quotients.
Denote $i$ the closed embedding of $\cE'$ into $\cE$. 

As  maps between Chow groups,
\[
0_{\cE}^!\circ {i_*} =e(\cE'') \circ 0_{\cE'}^! .
\] Here  $i_*$ denotes the pushforward  map,  $0^!_\cE$ and $0^!_{\cE'}$ Gysin pullbacks along zero  sections, and $e(\cE'')$ the Euler class.

\end{lemma}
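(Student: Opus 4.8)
The plan is to reduce everything to the cartesian diagram~(\ref{cart}), homotopy invariance of Chow groups for vector bundle stacks, and the self-intersection formula for the honest vector bundle $\cE''$. Write $\pi\colon\cE\to Y$, $\pi'\colon\cE'\to Y$, $\pi''\colon\cE''\to Y$ for the projections, so that $\pi''\circ f=\pi$ and $\pi\circ i=\pi'$; note also that the left vertical arrow of~(\ref{cart}) is exactly $\pi'$. By homotopy invariance the flat pullbacks $\pi^*$, $\pi'^*$, $\pi''^*$ are isomorphisms on Chow groups. Since $\cE$ and $\cE'$ need not be representable, the Gysin maps $0_{\cE}^!$ and $0_{\cE'}^!$ are by definition $(\pi^*)^{-1}$ and $(\pi'^*)^{-1}$ respectively, whereas for the vector bundle $\cE''$ the map $0_{\cE''}^!=(\pi''^*)^{-1}$ coincides with the refined Gysin homomorphism of the regular embedding $0_{\cE''}$ and in particular satisfies the self-intersection formula $0_{\cE''}^!\circ(0_{\cE''})_*=e(\cE'')\cap(-)$.

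I would then argue in four short steps. Fix a class $\alpha\in A_*(\cE')$ and put $\gamma:=0_{\cE'}^!\alpha\in A_*(Y)$, so that $\alpha=\pi'^*\gamma$. Next, since $f$ is flat and $0_{\cE''}$ is proper, flat base change along the cartesian square~(\ref{cart}) gives $i_*\pi'^*\gamma=f^*(0_{\cE''})_*\gamma$, i.e.\ $i_*\alpha=f^*(0_{\cE''})_*\gamma$. Now the self-intersection formula, rewritten as $(0_{\cE''})_*\gamma=\pi''^*\bigl(e(\cE'')\cap\gamma\bigr)$, together with $f^*\pi''^*=(\pi''\circ f)^*=\pi^*$, yields $i_*\alpha=\pi^*\bigl(e(\cE'')\cap\gamma\bigr)$. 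Finally, applying $0_{\cE}^!=(\pi^*)^{-1}$ gives $0_{\cE}^!(i_*\alpha)=e(\cE'')\cap\gamma=e(\cE'')\cap 0_{\cE'}^!\alpha$, which is the asserted identity of operators. (One could also run the argument through the self-intersection formula for the regular embedding $i$, whose normal bundle is $\pi'^*\cE''$; the route above is cleaner since it involves only vector-bundle-stack projections and not compositions of Gysin maps.)

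I do not expect a genuine obstacle. The hypothesis that $Y$ be stratified by global quotients is imposed precisely so that the intersection theory of Artin stacks is available, and within it all the tools used above---homotopy invariance for vector bundle stacks, compatibility of flat pullback with proper pushforward in a cartesian square, and the self-intersection formula for a vector bundle---are standard. The only care needed is bookkeeping: identifying the left vertical of~(\ref{cart}) with $\pi'$ so that homotopy invariance applies to it, and keeping track of the fact that $0_{\cE}^!$ and $0_{\cE'}^!$ are by definition the inverses of flat pullback rather than honest Gysin maps of regular embeddings. Everything else is formal.
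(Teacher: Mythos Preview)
Your proof is correct and is essentially the paper's own argument. Both invert the flat pullback $\pi'^*$ to rewrite an arbitrary class as $\pi'^*\gamma$, use base change in the cartesian square~(\ref{cart}) to turn $i_*\pi'^*$ into $f^*(0_{\cE''})_*$, and then finish with the self-intersection formula $0_{\cE''}^!\circ(0_{\cE''})_*=e(\cE'')$; the only cosmetic difference is that the paper phrases the last step via $0_\cE^!\circ f^!=(f\circ 0_\cE)^!=0_{\cE''}^!$ whereas you phrase it via $f^*\pi''^*=\pi^*$.
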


\begin{proof}
We include a proof for convenience of the reader.

Let $\pi_\cE, \pi_{\cE'}$ denote the projections $\cE \to Y$ and $\cE' \to Y$ respectively.
As $Y$ is of finite  type and stratified by global quotients, the flat pullback  $\pi_{\cE'}^!$
is an  isomorphism with inverse $0_{\cE'}^!$.

Consider  $0_{\cE}^!\circ {i_*}\circ \pi_{\cE'}^!$.   As diagram \eqref{cart} is cartesian,
we have 
\[
{i_*}\circ \pi_{\cE'}^! =f^! \circ {0_{\cE''}}_*.
\]
Therefore
\[
0_{\cE}^!\circ {i_*}\circ {\pi_\cE}^! = 0_{\cE}^! \circ f^! \circ {0_{\cE''}}_*
= (f\circ 0_\cE)^!\circ  {0_{\cE''}}_* = 0_{\cE''}^!\circ {0_{\cE''}}_* =e(\cE''),
\] and this is equivalent  to  $0_{\cE}^!\circ {i_*} =e(\cE'') \circ0_{\cE'}^! $.

\end{proof}

\section{ virtual class of $\gm$ gerbes}
In this section, we prove the results sketched in the introduction.

Let $p\colon \cG \to B$ be a $\gm$-gerbe over  a DM stack $B$ of finite type over $\bC$, and $\phi\colon E^\bullet  \to L_{\cG}$ a POT for $\cG$.
\subsection{Virtual class of $\cG$}
For the distinguished triangle between cotangent complexes
$
p^*\bL_B  \to \bL_\cG \to \bL_p, 
$ we have
$p^*\bL_B  \simeq \tau^{\le 0}(\bL_\cG)$
 and $h^1(\bL_\cG) =h^1(\bL_p)$, since
 $\bL_B \in \qcoh^{\le 0}(B)$ and 
$\bL_p[1] \simeq h^1(\bL_p)$ is locally free.
Therefore we have a distinguished triangle between truncated cotangent complexes
\[
p^*L_B  \to    L_\cG            \to \bL_p.
\]

As in \cite{Po}, we truncate $E^\bullet$ to $[-1,0]$. 
There exists a map between distinguished triangles
\[
\xymatrix{
F^\bullet \ar[r]\ar[d] & E^\bullet \ar[r]\ar[d]^\phi & \bL_p\ar[d]^{\id} \\
p^*L_B  \ar[r]        &  L_\cG \ar[r]           &\bL_p,
}
\] where $E^\bullet \to \bL_p$ is the composition of $\phi$ and  $L_\cG \to \bL_p$.
Denote $\psi$ the first vertical map $ F^\bullet  \to p^*L_B$, then it can be identified with $\tau^{\le 0}(\phi)$. Note that $h^0(\psi)$ is an isomorphism, $h^{-1}(\psi)$ is surjective, and $F^\bullet$  is perfect  in $[-1,0]$.

Then we can remove the moving part of $h^{-1}(F^\bullet)$ from $\psi$ as follows.
The  map 
\[
h^{-1}(F^\bullet)[1]\simeq \tau^{\le -1}F^\bullet \to F^\bullet
\] and the inclusion map $h^{-1}(F^\bullet)^\m \to h^{-1}(F^\bullet) $
induces a map
\[
 h^{-1}(F^\bullet)^\m[1]\to h^{-1}(F^\bullet)[1]\to F^\bullet.
\] Denote ${F^\bullet}^\f$ the cone of the map
$h^{-1}(F^\bullet)^\m[1]\to F^\bullet$. By the lemma below we see that $\psi$ uniquely induces a map
\[
\psi^\f \colon {F^\bullet}^\f\to p^*L_B.
\] Note that  
$h^0(\psi^\f)$ is an isomorphism, and $h^{-1}(\psi^f)$ is surjective.

\begin{lemma}
There are no nonzero maps from $h^{-1}(F^\bullet)^\m[1]$ or $h^{-1}(F^\bullet)^\m[2]$
to $ p^*L_B$ in $\qcoh(\cG)$.
\end{lemma}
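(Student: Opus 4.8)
The plan is to exploit the weight decomposition of quasi-coherent sheaves on the $\gm$-gerbe $\cG$ together with the fact that $p^*L_B$ is a complex with only fixed part. First I would observe that $h^{-1}(F^\bullet)^\m$ lies in $\qc(\cG)^\m$ by construction, so $h^{-1}(F^\bullet)^\m[1]$ and $h^{-1}(F^\bullet)^\m[2]$ are complexes supported entirely in the moving part; meanwhile $p^*L_B$, being pulled back along $p$, has all cohomology sheaves in the fixed part $\qc(\cG)^\f$. The decomposition $\qc(\cG) \simeq \qc(\cG)^\f \times \qc(\cG)^\m$ from Section 2.2 should then force all $\Hom$ and $\mathrm{Ext}$ groups between objects of these two orthogonal subcategories to vanish.

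The one subtlety is that this orthogonality is a statement in the \emph{abelian} category $\qc(\cG)$, whereas the maps in question live in the triangulated category $\qcoh(\cG)$ of $\cO_\cG$-modules with quasi-coherent cohomology. So the key step is to transport the question into $D(\qc(\cG))$, where the product decomposition of abelian categories induces a product decomposition of derived categories $D(\qc(\cG)) \simeq D(\qc(\cG)^\f) \times D(\qc(\cG)^\m)$, and hence orthogonality of the two factors; since $\gm$-gerbes have affine diagonal, they are in particular quasi-compact with affine diagonal (or noetherian and affine-pointed), so the comparison result \cite[Theorem C.1]{HNR} recalled in Section 2.2 gives an equivalence $D^+(\qc(\cG)) \xrightarrow{\sim} \qcoh^+(\cG)$, allowing us to compute the $\Hom$'s in $\qcoh(\cG)$ as $\Hom$'s in $D^+(\qc(\cG))$. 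I should check that $h^{-1}(F^\bullet)^\m[1]$, $h^{-1}(F^\bullet)^\m[2]$, and $p^*L_B$ all lie in the bounded-below part $\qcoh^+(\cG)$: the first two are single sheaves placed in a single degree, and $p^*L_B = \tau^{\ge -1}$ of something, hence bounded below as well — so the equivalence applies.

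Once everything is in $D^+(\qc(\cG))$, the computation is immediate: $\Hom_{D(\qc(\cG))}(M, N) = 0$ whenever $M$ is concentrated in the moving summand and $N$ in the fixed summand, because the two summands are orthogonal and $\Hom$ out of a complex in one factor into a complex in the other is computed factorwise. Applying this with $M = h^{-1}(F^\bullet)^\m[1]$ or $h^{-1}(F^\bullet)^\m[2]$ and $N = p^*L_B$ gives the claim. I expect the main obstacle to be purely bookkeeping: making sure the passage between the three categories $\qc(\cG)$, $D^+(\qc(\cG))$, and $\qcoh^+(\cG)$ is justified — in particular verifying the boundedness hypotheses and the affine-pointedness (equivalently, affine diagonal) of $\cG$ so that \cite[Theorem C.1]{HNR} is applicable — rather than in the orthogonality argument itself, which is formal once we are on the derived-category-of-quasi-coherent-sheaves side.
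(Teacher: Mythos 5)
Your underlying idea — orthogonality of the moving and fixed summands should kill these Homs — is the right one, and your conclusion is correct, but the route you take has a real problem of hypotheses, and the paper deliberately avoids it.

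You propose to transport the computation to $D^+(\mathrm{QCoh}(\cG))$ via \cite[Theorem C.1]{HNR}, and you justify applicability by asserting that $\gm$-gerbes have affine diagonal. That assertion is false in general: the diagonal of $\cG$ factors through the diagonal of $B$, and for a general DM stack $B$ (not assumed separated) $\Delta_B$ need not be affine. The paper is careful about exactly this point — affine-pointedness of $\cG$ (equivalently of $B$) is introduced as an extra hypothesis only in Theorem \ref{pot}, and the section's standing assumption on $B$ is merely ``DM stack of finite type over $\bC$.'' The lemma is used in Proposition \ref{gvir} under those weaker hypotheses, so your proof would not cover the generality in which the lemma is needed.

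The paper's proof sidesteps the derived comparison entirely by working with the t-structure on $\qcoh(\cG)$ and arranging that the only nonformal step is a $\Hom$ in the heart $\qc(\cG)$. Concretely: $h^{-1}(F^\bullet)^\m[2] \in \qcoh^{\le -2}(\cG)$ while $p^*L_B \in \qcoh^{\ge -1}(\cG)$ (using flatness of $p$ and $L_B \in \qcoh^{[-1,0]}$, which is precisely why the truncated cotangent complex is used — see the remark after the POT definition), so this $\Hom$ vanishes for pure degree reasons, with no orthogonality needed. For the $[1]$ shift, the truncation triangle $h^{-1}(p^*L_B)[1]\to p^*L_B\to h^0(p^*L_B)$ together with degree reasons reduces the $\Hom$ to $\Hom_{\qc(\cG)}(h^{-1}(F^\bullet)^\m,\, h^{-1}(p^*L_B))$, a $\Hom^0$ in the abelian heart, where the decomposition $\qc(\cG)\simeq\qc(\cG)^\f\times\qc(\cG)^\m$ applies directly since $h^{-1}(p^*L_B)\simeq p^*h^{-1}(L_B)$ has no moving part. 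Your approach, if you insist on it, buys nothing here and costs you hypotheses: you should at minimum note that it restricts to $B$ quasi-compact and affine-pointed (or with affine diagonal), and drop the incorrect blanket claim about $\gm$-gerbes.
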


\begin{proof} \label{van}
As $L_B \in  \qcoh^{[-1,0]}(B)$ and $p$ is flat,  $p^*L_B\in \qcoh^{[-1,0]}(\cG)$.  Since $h^{-1}(F^\bullet)^\m[2] \in \qcoh^{\le -2}(\cG)$ and $p^*L_B \in 
\qcoh^{\ge -1}(\cG)$,
$
\Hom_{\qcoh(\cG)}(h^{-1}(F^\bullet)^\m[2], p^*L_B)=0.
$

Similarly, using the exact triangle $h^{-1}(p^*L_B)[1] \to p^*L_B \to h^0(p^*L_B)$, we see that 
\[
\Hom_{\qcoh(\cG)}(h^{-1}(F^\bullet)^\m[1], p^*L_B) \simeq
\Hom_{\qcoh(\cG)}(h^{-1}(F^\bullet)^\m[1], h^{-1}(p^*L_B)[1]),
\]
which can be identified with
$\Hom_{\textrm{QCoh}(\cG)}(h^{-1}(F^\bullet)^\m, h^{-1}(p^*L_B))
$.
As $h^{-1}(p^*L_B) \simeq p^*h^{-1}(L_B)$ has no moving part, there is only the zero map 
between $h^{-1}(F^\bullet)^\m $ and $h^{-1}(p^*L_B)$.
\end{proof}

\begin{lemma}
The sheaf $h^{-1}(F^\bullet)^\m$ is locally free of finite rank, and 
the complex ${F^\bullet}^\f$ is perfect in $[-1,0]$.
\end{lemma}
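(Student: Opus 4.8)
The plan is to prove the two assertions separately, the first being the key input to the second.

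\textbf{Step 1: $h^{-1}(F^\bullet)^\m$ is locally free of finite rank.} Since $F^\bullet$ is perfect in $[-1,0]$, it is \'etale-locally quasi-isomorphic to a two-term complex $[G^{-1} \to G^0]$ of locally free sheaves of finite rank. Hence $h^{-1}(F^\bullet)$ is locally the kernel of a map between locally free sheaves, so it is coherent; being the fixed-plus-moving decomposition of a coherent sheaf on a $\gm$-gerbe, $h^{-1}(F^\bullet)^\m$ is coherent as well. The point is that it is moreover \emph{locally free}. I would argue as follows: the complex $F^\bullet$ fits in the distinguished triangle $F^\bullet \to E^\bullet \to \bL_p$, and $\bL_p[1] = h^1(\bL_p)$ is a line bundle supported in degree $-1$ (the relative cotangent complex of a $\gm$-gerbe). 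Taking the long exact cohomology sequence of this triangle, and using that $h^{-1}(E^\bullet)$ is locally a subsheaf of a locally free sheaf (again from perfectness of $E^\bullet$ in $[-1,1]$, or rather its truncation), one gets a short exact sequence relating $h^{-1}(F^\bullet)$, $h^{-1}(E^\bullet)$ and the line bundle $h^1(\bL_p) = \bL_p[1]$. Now decompose into weight spaces: $\bL_p[1]$ is \emph{purely moving} — indeed the relative cotangent complex of $B\gm$ is $\cO(0)$ placed in degree $-1$ twisted appropriately; more precisely $\bL_p \simeq \cO_\cG[1]$ with trivial weight, so one must be careful here. Let me instead use the cleaner route: $\psi = \tau^{\le 0}\phi \colon F^\bullet \to p^*L_B$ has $h^0$ an isomorphism and $h^{-1}$ surjective onto $h^{-1}(p^*L_B) = p^*h^{-1}(L_B)$, which is purely fixed. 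Therefore $h^{-1}(\psi)$ kills $h^{-1}(F^\bullet)^\m$, so $h^{-1}(F^\bullet)^\m$ is contained in the kernel $K$ of $h^{-1}(\psi)$. The triangle defining $F^\bullet$ from $p^*L_B$ — namely the top-vs-bottom comparison triangle whose third term is the cone of $\psi$ — identifies this kernel, together with $h^0$ being an isomorphism, with (a shift of) the relevant piece of the cone. Concretely, since $E^\bullet$ is perfect in $[-1,1]$ and $\phi\colon E^\bullet \to L_\cG$ is a POT, the cone of $\phi$ has cohomology only in degree $-2$ (this is the standard fact that $h^{-1}(\phi)$ surjective, $h^0,h^1$ isos force $\mathrm{cone}(\phi)$ to be $h^{-2}(\mathrm{cone}(\phi))[2]$, and that $h^{-2}$ is locally free because $E^\bullet$ and $L_\cG$ are both perfect-ish in a range). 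Passing to $\tau^{\le 0}$, the cone of $\psi$ has cohomology in degrees $-2$ and possibly $0$; but $h^0(\psi)$ iso and $h^{-1}(\psi)$ surjective kill the degree-$0$ part, so $\mathrm{cone}(\psi) \simeq N[2]$ with $N = \ker h^{-1}(\psi)$ locally free of finite rank. Taking the moving part of everything in sight and using that $p^*L_B$ is purely fixed, the moving part of the triangle $N[2] \to F^\bullet[1] \to p^*L_B[1]$ gives $h^{-1}(F^\bullet)^\m \simeq N^\m$, which is a direct summand of the locally free sheaf $N$ hence locally free of finite rank.

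\textbf{Step 2: ${F^\bullet}^\f$ is perfect in $[-1,0]$.} By definition ${F^\bullet}^\f$ is the cone of the map $h^{-1}(F^\bullet)^\m[1] \to F^\bullet$. Both $F^\bullet$ (perfect in $[-1,0]$ by construction) and $h^{-1}(F^\bullet)^\m[1]$ (by Step 1, a locally free sheaf of finite rank placed in degree $-1$, hence perfect in $[-1,-1] \subset [-1,0]$) are perfect in $[-1,0]$; perfectness is preserved under cones, so ${F^\bullet}^\f$ is perfect in $[-2,0]$ a priori. To cut the range down to $[-1,0]$ I compute cohomology: from the defining triangle $h^{-1}(F^\bullet)^\m[1] \to F^\bullet \to {F^\bullet}^\f$ one reads off $h^i({F^\bullet}^\f) = 0$ for $i \ge 1$ and $i \le -2$, while $h^{-1}({F^\bullet}^\f) = h^{-1}(F^\bullet)/h^{-1}(F^\bullet)^\m = h^{-1}(F^\bullet)^\f$ and $h^0({F^\bullet}^\f) = h^0(F^\bullet)$. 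A perfect complex in $[-2,0]$ with no cohomology in degree $-2$ is perfect in $[-1,0]$: locally write it as $[G^{-2} \to G^{-1} \to G^0]$ with $G^i$ locally free finite rank; vanishing of $h^{-2}$ means $G^{-2} \to G^{-1}$ is injective with locally free cokernel (since $h^{-1}$, being the next cohomology of a complex of locally free sheaves with locally free kernel situation — here one uses that the image is a local direct summand because the complex is exact at the $G^{-2}$ spot and $G^{-2}$ is locally free, so \'etale-locally the inclusion $G^{-2}\hookrightarrow G^{-1}$ splits), so $[G^{-2}\to G^{-1}\to G^0]$ is quasi-isomorphic to $[G^{-1}/G^{-2} \to G^0]$, a two-term complex of locally free sheaves in $[-1,0]$. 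This gives perfection of ${F^\bullet}^\f$ in $[-1,0]$.

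\textbf{Main obstacle.} The delicate point is Step 1, specifically verifying that $h^{-1}(F^\bullet)^\m$ is honestly locally free rather than merely coherent — i.e. controlling the cohomology of the cone of $\psi$ (equivalently of $\phi$) well enough to see that the relevant kernel sheaf $N$ is locally free. This hinges on the interplay between perfectness of $E^\bullet$ in $[-1,1]$, the structure $\bL_p[1] = $ (a line bundle in degree $-1$), and the fact that $p^*L_B$ is purely fixed so that taking moving parts is exact and isolates $h^{-1}(F^\bullet)^\m$ as a summand of a locally free sheaf. Once local freeness is in hand, Step 2 is a routine, purely local, splitting-of-a-subbundle argument.
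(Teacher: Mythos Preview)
Both steps contain genuine gaps.

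\textbf{Step 1.} Your claim that $N=\ker h^{-1}(\psi)$ is locally free of finite rank is not justified, and in general it is false: $L_B$ (hence $p^*L_B$) is not perfect for an arbitrary finite-type DM stack $B$, so the cone of $\psi$ is not a perfect complex and there is no reason for its single nonzero cohomology sheaf $N$ to be locally free. (The parenthetical appeal to ``$E^\bullet$ and $L_\cG$ both perfect-ish'' does not help, since $L_\cG$ is not perfect either.) Your identification $h^{-1}(F^\bullet)^\m\simeq N^\m$ is correct, but the conclusion ``direct summand of the locally free sheaf $N$'' does not follow.

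\textbf{Step 2.} The assertion that an injection $G^{-2}\hookrightarrow G^{-1}$ of finite locally free sheaves splits \'etale-locally is false: take $\cO\xrightarrow{\,x\,}\cO$ on $\bA^1$. Equivalently, a perfect complex in $[-2,0]$ with vanishing $h^{-2}$ need \emph{not} be perfect in $[-1,0]$: the same example placed in degrees $[-2,-1]$ is quasi-isomorphic to $(\cO/x)[1]$, which has tor-amplitude $[-2,-1]$, not $[-1,0]$.

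The paper's argument sidesteps both problems with a single local move. Choose, locally, a two-term representative $\widetilde{F}^\bullet=[\widetilde{F}^{-1}\to\widetilde{F}^0]$ of $F^\bullet$ by finite locally free sheaves. Weight-decomposition on a $\gm$-gerbe is exact and preserves finite local freeness, so $\widetilde{F}^\bullet=(\widetilde{F}^\bullet)^\f\oplus(\widetilde{F}^\bullet)^\m$ with each summand again a two-term complex of finite locally frees. The one input you already noted but did not exploit directly is that $h^0(F^\bullet)\simeq h^0(p^*L_B)$ is purely fixed; hence $h^0\bigl((\widetilde{F}^\bullet)^\m\bigr)=0$, i.e.\ $(\widetilde{F}^{-1})^\m\to(\widetilde{F}^0)^\m$ is \emph{surjective}, and its kernel $h^{-1}(F^\bullet)^\m$ is therefore locally free of finite rank. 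Simultaneously, the cone ${F^\bullet}^\f$ is locally represented by $(\widetilde{F}^\bullet)^\f$, which is visibly perfect in $[-1,0]$. In other words, the splitting you need in Step~2 comes for free from the weight decomposition of the local model~--- not from any general principle about injections of locally free sheaves.
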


\begin{proof}

Locally, we can represent $F^\bullet$ by a two term complex $\widetilde{F}^\bullet$ of locally free sheaves of  finite rank in $[-1,0]$. The complex $\widetilde{F}^\bullet$ is the direct sum of its moving and fixed parts, which are also complexes of locally free sheaves in $[-1,0]$.  

It is clear 
$h^{-1}(F^\bullet)^\m$ is given by $h^{-1}$ of the moving part of $\widetilde{F}^\bullet$.
As $h^0(F^\bullet) \simeq h^0(p^*L_B)$ has no moving part, the moving part 
of
$\widetilde{F}^\bullet$   has vanishing $h^0$. Therefore $ h^{-1}(F^\bullet)^\m$ is quasi isomorphic to the moving part of $\widetilde{F}^\bullet$, and as  
 the kernel of a surjective map between locally frees of finite rank, it is locally free of finite
 rank.

Now by the construction of ${F^\bullet}^\f$ as a cone, we conclude ${F^\bullet}^\f$ is locally represented by 
the fixed part of $\widetilde{F^\bullet}$, and it is perfect in $[-1,0]$.
\end{proof}

The map $\psi^\f$ induces a closed imbedding  
\[
\iota \colon p^*\cN_B\to \cF^\f
\] 
where $\cN_B=h^1/h^0(L_B^\vee)$
is  the intrinsic normal  sheaf  of $B$  and 
$\cF^\f=h^1/h^0({{F^\bullet}^\f}^\vee)$ the vector bundle stack associated with  ${F^\bullet}^\f$. Let $\cC_B \subset \cN_B$ be the intrinsic normal cone of $B$, then we can view $p^*\cC_B$
as a closed substack of $\cF^\f$ via $\iota$.

\begin{prop}\label{gvir}  Assume $B$ is of finite type over  $\bC$ and  stratified by global quotients.
The virtual class $\vir{\cG}$ determined by $\phi$ 
is given by 
\[
e(\bV(h^{-1}(F^\bullet)^\m)) \cap 0^![p^*\cC_B]
\] 
here $0^!\colon  A_*(\cF^\f) \to A_*(\cG)$ denotes Gysin pullback along
the zero section of $\cF^\f$, 
$\bV(h^{-1}(F^\bullet)^\m)$ the vector bundle $\spec \sym {h^{-1}(F^\bullet)^\m}$, and $e$ the Euler class.
\end{prop}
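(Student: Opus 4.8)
The plan is to reduce the computation of $\vir{\cG}$ to an application of Lemma \ref{top}, using the decomposition of $F^\bullet$ into its moving and fixed parts. First I would recall how $\vir{\cG}$ is defined from the POT $\phi$: only the truncation $\tau^{\le 0}(\phi)=\psi\colon F^\bullet\to L_\cG$ matters (see the remark following the definition of POT and the discussion of semi-perfect obstruction theories), so $\vir{\cG}=0^!_{\cF}[\cC_\cG]$ where $\cF=h^1/h^0({F^\bullet}^\vee)$ is the vector bundle stack of $F^\bullet$ and $\cC_\cG\subset\cN_\cG$ is the intrinsic normal cone of $\cG$, embedded in $\cF$ via $\psi$. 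The key structural input is that $\cG\to B$ is a $\gm$-gerbe, hence smooth of relative dimension $0$ with $\bL_p[1]$ locally free; combined with $p^*L_B\simeq\tau^{\le 0}\bL_\cG$ this gives $\cN_\cG\simeq p^*\cN_B$ and $\cC_\cG\simeq p^*\cC_B$ compatibly with the embeddings. So $\vir{\cG}=0^!_{\cF}[p^*\cC_B]$ with $p^*\cC_B\hookrightarrow\cF$.

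Next I would set up the exact sequence of vector bundle stacks over $\cG$ coming from the cone triangle $h^{-1}(F^\bullet)^\m[1]\to F^\bullet\to {F^\bullet}^\f$. Dualizing and taking $h^1/h^0$, this triangle yields a short exact sequence of Picard stacks
\[
\cF^\f\to\cF\to\bV(h^{-1}(F^\bullet)^\m),
\]
i.e. $\cF\to\bV(h^{-1}(F^\bullet)^\m)$ is smooth surjective with kernel $\cF^\f$, and since $\bV(h^{-1}(F^\bullet)^\m)$ is an honest vector bundle (by the previous lemma, $h^{-1}(F^\bullet)^\m$ is locally free of finite rank), the inclusion $\cF^\f\hookrightarrow\cF$ is a closed embedding. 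This is exactly the cartesian square \eqref{cart} with $Y=\cG$, $\cE=\cF$, $\cE''=\bV(h^{-1}(F^\bullet)^\m)$, $\cE'=\cF^\f$. I would then check that under the identification $\cF^\f=h^1/h^0({{F^\bullet}^\f}^\vee)$, the embedding $p^*\cC_B\hookrightarrow\cF$ factors through $\cF^\f$ via $\iota$ — this follows because $\psi$ factors through $\psi^\f$ on cones and the construction of $\cN_B\to\cF^\f$ in \cite{BF1, Po} is functorial in the obstruction theory. So $[p^*\cC_B]$, as a cycle on $\cF$, is $i_*$ of the cycle $[p^*\cC_B]$ on $\cF^\f$.

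Finally I would apply Lemma \ref{top} with $Y=\cG$ (which is of finite type over $\bC$ and stratified by global quotients because $B$ is and $p$ is a gerbe, so quotient-stratified-ness is inherited): it gives
\[
0^!_{\cF}\circ i_*=e\bigl(\bV(h^{-1}(F^\bullet)^\m)\bigr)\circ 0^!_{\cF^\f}
\]
as maps $A_*(\cF^\f)\to A_*(\cG)$. Applying both sides to $[p^*\cC_B]$ yields
\[
\vir{\cG}=0^!_{\cF}[p^*\cC_B]=0^!_{\cF}\, i_*[p^*\cC_B]=e\bigl(\bV(h^{-1}(F^\bullet)^\m)\bigr)\cap 0^!_{\cF^\f}[p^*\cC_B],
\]
which is the claimed formula once $0^!$ is read as the Gysin pullback $A_*(\cF^\f)\to A_*(\cG)$ along the zero section of $\cF^\f$. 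I expect the main obstacle to be the bookkeeping in the first two steps: verifying that the intrinsic normal cone of $\cG$ really is the pullback $p^*\cC_B$ and that its embedding into $\cF$ is compatible with the factorization through $\cF^\f$ (equivalently, that the virtual class defined via $\phi$ agrees with the one defined via $\psi^\f$ together with the extra bundle $H$). The triangle-chasing for the vector bundle stack sequence and the invocation of Lemma \ref{top} are then formal.
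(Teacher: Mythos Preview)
Your approach is essentially identical to the paper's: both reduce to Poma's definition $\vir{\cG}=0^!_{\cF}[p^*\cC_B]$, build the exact sequence of vector bundle stacks $\cF^\f\to\cF\to\bV(h^{-1}(F^\bullet)^\m)$ from the cone triangle, observe that the embedding $p^*\cC_B\hookrightarrow\cF$ factors through $\cF^\f$, and then apply Lemma~\ref{top}. The only cosmetic difference is that the paper justifies the factorization by checking it locally, whereas you invoke functoriality of the $h^1/h^0$ construction; both work.
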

\begin{proof}
This follows from the definition of virtual classes in \cite{Po} and the construction of $\psi^\f$.

The map $\psi$ induces a closed embedding $p^*\cC_B \to \cF$, where
$\cF$ denotes  $h^1/h^0({F^\bullet}^\vee)$. The virtual class 
of $\cG$ is obtained by pulling back $[p^*\cC_B]$ along the zero section
of $\cF$. 

The exact triangle 
 \[
  h^{-1}(F^\bullet)^\m[1]\to F^\bullet \to {F^\bullet}^\f 
 \]
 induces 
an exact sequence of vector bundle stacks
\[
 \cF^\f  \to \cF \to \bV(h^{-1}(F^\bullet)^\m).
\]  In particular, we have an cartesian square
 \[
 \xymatrix{
 \cF^\f \ar[r] \ar[d] & \cG \ar[d]\\
 \cF  \ar[r]  & \bV(h^{-1}(F^\bullet)^\m)
}
\]  for which the assumptions in Lemma \ref{top} are satisfied.
Here   the right vertical map from $\cG$ is the zero section of  $\bV(h^{-1}(F^\bullet)^\m)$.
By the construction of $\psi^\f$, we see that the closed embedding $p^*\cC_B \to \cF$ factor through $\cF^\f$ as it is the case locally, then we conclude the proof by Lemma \ref{top}.
\end{proof}

\begin{rmk} 
As $\cG$ has affine stabilizers, it is stratified by global quotients by \cite[
Proposition 3.5.9]{Kr} and $0^!\colon  A_*(\cF^\f) \to A_*(\cG)$ is defined.
\end{rmk}

\subsection{Semi-perfect obstruction theory for $B$}

For any \'etale map $U \to B$ with section $s\colon U \to \cG$, 
it is straightforward to see that $s$ is smooth, $s^*p^*L_B\simeq L_U$, and $s^*\psi^\f$ is a POT for $U$, which we shall denote by  $\psi_U$.

\begin{prop}
$\{\psi_U\}$ determines 
a semi-perfect obstruction theory $\psi_B$ for $B$.
\end{prop}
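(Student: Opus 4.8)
The plan is to verify the two defining conditions of a semi-perfect obstruction theory for the collection $\{\psi_U\}$ indexed by \'etale maps $U \to B$ admitting a section $s\colon U \to \cG$. First I would address Condition (1): for each such $U$, the local obstruction sheaf is $\ob_U = h^1(({F^\bullet}^\f|_U)^\vee) = h^1((s^*{F^\bullet}^\f)^\vee)$. Since ${F^\bullet}^\f$ lives on $\cG$ and is perfect in $[-1,0]$, the sheaf $h^1(({{F^\bullet}^\f})^\vee)$ is a coherent sheaf on $\cG$; moreover ${F^\bullet}^\f$ is in the fixed part (it is built from the fixed parts of local models, as shown in the preceding lemma), so its dual and the cohomology $h^1(({{F^\bullet}^\f})^\vee)$ also have no moving part, hence descend along $p$ to a coherent sheaf $\ob_B$ on $B$ with $p^*\ob_B \simeq h^1(({{F^\bullet}^\f})^\vee)$. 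Then for any \'etale $U \to B$ with section $s$, one has $\ob_U = s^*p^*\ob_B \simeq \ob_B|_U$ canonically, because $p \circ s = \mathrm{id}_U$; this gives the required isomorphisms over the overlaps $U_{ij}$ and the descent to a global obstruction sheaf $\ob_B$. The key point making this clean is that the obstruction sheaf is defined \emph{globally on $\cG$} and pulls back to each chart, so the gluing is automatic rather than requiring a cocycle check.

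Next I would verify Condition (2), using the equivalent pointwise form (2') recalled in the excerpt. Fix a closed point $x\colon \spec\bC \to B$ and choose two \'etale charts $U_i \to B$, $U_j \to B$ with sections $s_i, s_j$ and lifts $y_i\colon \spec\bC \to U_i$, $y_j\colon \spec\bC \to U_j$ of $x$. I must show the two maps $h^1((x^*L_B)^\vee) \to x^*\ob_B$ obtained from $\psi_{U_i}$ and $\psi_{U_j}$ agree. The strategy is to factor everything through a computation at the point $\bar x \colon \spec\bC \to \cG$ obtained by composing $x$ with \emph{any} section (equivalently $s_i \circ y_i$ — but note that different sections of the gerbe over $\spec\bC$ differ by an automorphism, which I should check does not affect the relevant maps since the gerbe is banded by $\gm$ and the relevant sheaves are decomposed by $\gm$-weight). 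Since $\psi_{U_i} = s_i^* \psi^\f$ where $\psi^\f\colon {F^\bullet}^\f \to p^*L_B$ is a \emph{single map on $\cG$}, the induced obstruction assignment $h^1((y_i^*L_{U_i})^\vee) \to h^1((y_i^*{F^\bullet}^\f)^\vee)$ is the pullback along $y_i$ of the map $h^1((s_i^*\psi^\f)^\vee)$, and by functoriality of pullback this equals $(\bar x)^*$ of $h^1(({\psi^\f})^\vee)$ under the identifications $y_i^* s_i^* = (\bar x)^*$ and $y_i^* s_i^* p^* = x^*$. The same holds for $j$ with the same resulting map $\bar x^*(h^1(({\psi^\f})^\vee))$. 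Hence both obstruction assignments coincide, giving (2').

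The main obstacle I anticipate is the bookkeeping around \emph{sections of the gerbe}: two sections of $\cG$ over $\spec\bC$ (or over an \'etale chart) need not be equal, only isomorphic, and one must confirm that the maps on cohomology sheaves entering (2') are insensitive to this choice. This should follow because: (a) the cotangent complex $L_{U/S}$ and the sheaf $\ob_B$ are pulled back from $B$, so they are canonically independent of the section; (b) the map $\psi^\f$ is fixed once and for all on $\cG$, so only the ambiguity in $s$ matters, and since any two sections over the same base differ by a $1$-cocycle valued in $\gm = \underline{\mathrm{Aut}}$, the pullback functors $s^*$ for different $s$ are canonically isomorphic on the fixed part $\qc(\cG)^\f \simeq \qc(B)$ — and all complexes in sight (${F^\bullet}^\f$, $p^*L_B$, and hence the dualized cohomology sheaves) lie in the fixed part. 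So the argument reduces to the statement that the equivalence $p^*\colon \qc(B) \to \qc(\cG)^\f$ has all sections $s$ inducing the same quasi-inverse up to canonical isomorphism, which is exactly the content of Section 2.2. Once this is in place, conditions (1) and (2)/(2') are both essentially tautological consequences of $\psi^\f$ being defined on $\cG$ itself, and the proposition follows; I would also remark, following the remark after (2'), that the induced maps $\pi_0(\cN_{U_i}) \to \ob_{U_i}$ visibly descend to $\pi_0(\cN_B) \to \ob_B$ since they are pulled back from the single closed embedding $\iota\colon p^*\cN_B \to \cF^\f$ on $\cG$.
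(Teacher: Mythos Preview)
Your proposal is correct and follows essentially the same approach as the paper: for (1) you observe that $h^1(({F^\bullet}^\f)^\vee)$ is a global sheaf on $\cG$ with no moving part and hence descends to $B$, and for (2') you reduce to the single map $\psi^\f$ on $\cG$. The paper's proof is much terser on (2'), disposing of it in one line by saying that there is a unique section over any closed point of $B$; your more careful discussion of why two sections $s_i\circ y_i$ and $s_j\circ y_j$ induce the same obstruction map (because all complexes in sight lie in the fixed part, on which all sections induce canonically isomorphic pullbacks) is really just an unpacking of what that uniqueness statement means at the level of 2-isomorphism classes, so there is no genuine divergence in strategy.
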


\begin{proof}
We verify conditions (1) and (2') for semi-perfect obstruction theories.

The local obstruction sheaf on $U$ is obtained by 
the pullback along  $s$ of $h^1({{F^\bullet}^\f}^\vee)$.
As locally ${F^\bullet}^\f$ is given by a complex of locally free sheaves, and its cohomology sheaves have no moving parts, 
we see that $h^1({{F^\bullet}^\f}^\vee)$ has no moving part. Then (1)
is satisfied with $\ob =p_*h^1({{F^\bullet}^\f}^\vee)$.

Condition (2') is satisfied because there is a unique section over any closed point of $B$.
\end{proof}

\begin{rmk}
If there exists a  nondegenerate symmetric bilinear pairing $F^\bullet \simeq {F^\bullet}^\vee[1]$, then  $\psi^\f=\psi$ and $\psi_B$ is symmetric.
\end{rmk}

\begin{thm}Assume $B$ is proper over $\bC$ and stratified by global quotients. Denote $\vir{B}$ the virtual class determined by $\psi_B$. 
We have
\[
\vir{\cG}= e(\bV(h^{-1}(F^\bullet)^\m))
\cap p^*\vir{B}.
\]
\end{thm}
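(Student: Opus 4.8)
The plan is to combine Proposition \ref{gvir}, which computes $\vir{\cG}$ in terms of the intrinsic normal cone $\cC_B$ and the fixed part ${F^\bullet}^\f$, with the description of the virtual class $\vir{B}$ attached to a semi-perfect obstruction theory recalled in Section 2.3. First I would note that by Proposition \ref{gvir},
\[
\vir{\cG}= e(\bV(h^{-1}(F^\bullet)^\m)) \cap 0^!_{\cF^\f}[p^*\cC_B],
\]
so it suffices to identify $0^!_{\cF^\f}[p^*\cC_B]$ with $p^*\vir{B}$. Since the Euler class of a pulled-back bundle commutes with $p^*$ and with proper pushforward from $\cG$ to a point via the projection formula, the equality of the two numerical expressions will follow once this identification is in place; because $\cG \to B$ is a $\gm$-gerbe (hence smooth of relative dimension $-1$, in particular flat), $p^*$ on Chow groups is well-behaved and compatible with the Gysin maps involved.

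The key step is therefore to show $0^!_{\cF^\f}[p^*\cC_B] = p^*\vir{B}$ in $A_*(\cG)$. Here $\vir{B}$ is built from the semi-perfect obstruction theory $\psi_B$: one pushes forward the cycle $[\cC_B] \in A_*(\cN_B)$ along the closed embedding $\pi_0(\cN_B^\red) \to \ob_B$ into the coherent-sheaf stack, then applies the Gysin pullback to $B$. I would argue that pulling this construction back along the flat map $p$ reproduces exactly the construction of $0^!_{\cF^\f}[p^*\cC_B]$: the obstruction sheaf $\ob_B$ was defined in the previous proposition as $p_* h^1({{F^\bullet}^\f}^\vee)$, so $p^*\ob_B = h^1({{F^\bullet}^\f}^\vee)$ and the associated sheaf stack on $\cG$ is the coarse sheaf $\pi_0(\cF^\f)$ of the vector bundle stack $\cF^\f = h^1/h^0({{F^\bullet}^\f}^\vee)$. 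Under this identification the closed embedding $\iota\colon p^*\cN_B \to \cF^\f$ and the factorization of $p^*\cC_B$ through it, established in the proof of Proposition \ref{gvir}, pull back the defining data of $\psi_B$; compatibility of flat pullback with the cone construction, with $h^1/h^0$, with coarse-moduli, and with the Gysin pullback $0^!$ then gives the claim. Concretely, one invokes that $p^*$ commutes with the operations used in \cite{CL, Lee} to define virtual cycles for semi-perfect obstruction theories, which is a formal consequence of base change for flat morphisms.

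The main obstacle I anticipate is bookkeeping rather than a conceptual gap: one must check carefully that the intrinsic normal cone of $B$ pulls back correctly, i.e. $p^*\cC_B = \cC_\cG$-related data matches the cone used in Proposition \ref{gvir}'s proof, and that the \'etale-local POTs $\psi_U = s^*\psi^\f$ used to define $\psi_B$ are genuinely the restrictions along sections $s$ of the global object $\psi^\f$ on $\cG$ (this is exactly what makes $p^*\vir{B}$ computable via the global $\cF^\f$ rather than only via the local $\cF_i^\bullet$). Since $\cG \to B$ is a gerbe, every point of $B$ has an \'etale neighborhood with a section, so the semi-perfect obstruction theory $\psi_B$ is covered by such pullbacks, and the global nature of $\psi^\f$ means the vector bundle stacks $\cF_i^\bullet$ descend — so in fact $\vir{B}$ is computed exactly as $0^!_{\cF^\f \text{(descended)}}$ applied to $[\cC_B]$, and its pullback along $p$ is $0^!_{\cF^\f}[p^*\cC_B]$. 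Once this is spelled out, the theorem follows by substituting into the formula of Proposition \ref{gvir} and applying the projection formula for $e(\bV(h^{-1}(F^\bullet)^\m))$, using that $\bV(h^{-1}(F^\bullet)^\m)$ and its Euler class are pulled back from $B$ since $h^{-1}(F^\bullet)^\m$ — wait, it is locally free on $\cG$ but need not descend; however $e(\bV(\cdot))\cap p^*(-)$ makes sense directly on $\cG$, so no descent of $H$ is needed and the final formula is stated and proved on $\cG$ itself.
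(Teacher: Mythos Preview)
Your proposal is correct and follows essentially the same route as the paper: reduce via Proposition \ref{gvir} to the identity $0^!_{\cF^\f}[p^*\cC_B]=p^*\vir{B}$, then verify it by observing that the global embedding $\iota\colon p^*\cN_B\to\cF^\f$ restricts along sections to the local data defining $\psi_B$, and that flat pullback $p^*$ commutes with the operations in \cite{CL,Lee} defining $\vir{B}$. The digressions about the projection formula and descent of $h^{-1}(F^\bullet)^\m$ are unnecessary (the formula lives on $\cG$, as you eventually note), but the core argument matches the paper's.
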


\begin{proof}
We show that $
p^*\vir{B}=0^![p^*\cC_B],
$ then the theorem follows from Proposition \ref{gvir}.

Recall $\psi$ induces a closed imbedding 
$
\iota\colon p^*\cN_B \to \cF^\f.
$
And for each $U$ \'etale over $B$ with section $s\colon U \to \cG$, the POT $\psi_U$
is determined by $s^*\iota$.  
The semi-perfect obstruction theory $\psi_B$
determines a map 
$
\pi_0(\cN_B^\red) \to \ob,
$
and its pullback along $p$ can be identified with the map 
\[
\pi_0(p^*\cN_B^\red)\to \pi_0(\cF^\f)\simeq p^*\ob
\] induced by $\iota$. This is true because for any $U$ \'etale over $B$ with a section $s \colon U  \to \cG$,  the two maps match when pulled back along $s$ to $U$, 
and all possible $U$ form a smooth cover of $\cG$. Then it is easy to see 
$
p^*\vir{B}=0^![p^*\cC_B]
$ from the construction of $\vir{B}$ in \cite{CL}, since the flat pullback $p^*$ commutes with
the operations used in defining $\vir{B}$, this also follows from bivariance of 
virtual pullbacks established in \cite{Lee}.
\end{proof}

\begin{rmk}

Since $\cO_\cG$ has no moving parts, 
a cosection $\ob_\phi \to \cO_\cG$ is 
equivalent to a map $p^*\ob \to \cO_\cG$,  which
by  adjunction is the same as a 
cosection $\ob \to \cO_B$.
The theorem also holds for localized virtual cycles (\cite{KL, Ki}) by the same argument.
\end{rmk}

\begin{thm} When $B$ is quasi-compact and affine-pointed,   \label{pot}
the semi-perfect obstruction theory $\psi_B$ is a POT.
\end{thm}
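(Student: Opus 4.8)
The plan is to show that the étale-local perfect obstruction theories $\psi_U$ are actually the restrictions of a single global map $\psi^\f_B \colon F^\bullet_B \to L_B$ in $\qcoh^{\le 1}(B)$ that is a perfect obstruction theory, so that the semi-perfect obstruction theory $\psi_B$ is induced by a genuine POT. The key point is that on $\cG$ we already have the global map $\psi^\f\colon {F^\bullet}^\f \to p^*L_B$ with ${F^\bullet}^\f$ perfect in $[-1,0]$, and since ${F^\bullet}^\f$ and $p^*L_B$ both lie in $\qcoh(\cG)^\f$, we should be able to descend $\psi^\f$ along $p$ to $B$. First I would invoke the hypothesis: $B$ is quasi-compact and affine-pointed, and (being a DM stack locally of finite type) noetherian, so Proposition~2.? (the $\cite{HNR}$ statement recalled in Section~2.2) applies and gives an equivalence $D^+(\qc(B)) \simeq \qcoh^+(B)$; the same applies to $\cG$, which is also quasi-compact and affine-pointed (a $\gm$-gerbe over such $B$), so $D^+(\qc(\cG)) \simeq \qcoh^+(\cG)$. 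This is exactly the reason for the remark after the POT definition that the hypotheses let us replace $\qcoh^{\le 1}(X)$ by $D^{\le 1}(\qc(X))$.

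Next I would transport $\psi^\f$ through these equivalences. Under $D^+(\qc(\cG)) \simeq \qcoh^+(\cG)$, the map $\psi^\f\colon {F^\bullet}^\f \to p^*L_B$ becomes a morphism in $D^+(\qc(\cG))$; since all cohomology sheaves of source and target are objects of $\qc(\cG)^\f$, the morphism lies in the full subcategory $D^+(\qc(\cG)^\f)$. Now the equivalence $\qc(\cG)^\f \simeq \qc(B)$ given by $p_*$ (with inverse $p^*$) is exact, hence induces an equivalence $D^+(\qc(\cG)^\f) \simeq D^+(\qc(B))$; I would apply $p_*$ to get a morphism $\psi^\f_B \colon F^\bullet_B \to L_B$ in $D^+(\qc(B))$, where $F^\bullet_B = p_* {F^\bullet}^\f$, and then move back to $\qcoh^+(B)$ via the $\cite{HNR}$ equivalence. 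Two things need checking here: that $F^\bullet_B$ is perfect in $[-1,0]$, and that $p^* F^\bullet_B \simeq {F^\bullet}^\f$ compatibly with $\psi^\f$. The first follows because perfectness is étale-local and étale-locally over $B$ the gerbe is trivial, $U\times B\gm$, where $p_*$ of a fixed-part complex of locally free sheaves of finite rank is again such a complex (this is the computation already used in the proof of the preceding Proposition, identifying $\ob$ with $p_* h^1({{F^\bullet}^\f}^\vee)$). The second is the unit/counit of the adjunction $p^* \dashv p_*$ restricted to the fixed part, where it is an isomorphism.

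Finally I would verify that $\psi^\f_B$ is a perfect obstruction theory for $B$, i.e. that $h^0(\psi^\f_B)$ and $h^1(\psi^\f_B)$ are isomorphisms and $h^{-1}(\psi^\f_B)$ is surjective, and that $F^\bullet_B$ is perfect in $[-1,1]$ (indeed in $[-1,0]$). Since $p$ is faithfully flat, $p^*$ is exact and conservative, so these conditions can be checked after applying $p^*$, where they become the corresponding statements for $\psi^\f\colon {F^\bullet}^\f \to p^*L_B$ — and $h^0(\psi^\f)$ is an isomorphism and $h^{-1}(\psi^\f)$ surjective were established when $\psi^\f$ was constructed, while $h^1$ vanishes on both sides since ${F^\bullet}^\f$ is perfect in $[-1,0]$ and $L_B \in \qcoh^{[-1,0]}(B)$. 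It then remains only to check that the POT $\psi^\f_B$ induces exactly the collection $\{\psi_U\}$ — but for $U \to B$ étale with section $s\colon U \to \cG$, pulling back $\psi^\f_B$ along $U \to B$ agrees with $s^* \psi^\f$ because $s^* p^* = \id$ on $L_B$ and $s^* p^* F^\bullet_B \simeq s^* {F^\bullet}^\f$, and $s^*\psi^\f = \psi_U$ by definition. Hence $\psi_B$ is the semi-perfect obstruction theory induced by the POT $\psi^\f_B$.

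I expect the main obstacle to be the careful bookkeeping in the descent step: making sure that the morphism $\psi^\f$, a priori only a morphism of $\cO_\cG$-modules up to quasi-isomorphism, can be genuinely promoted to a morphism in $D^+(\qc(\cG))$ and then that it lands in the fixed-part subcategory in a way compatible with the equivalence — this is precisely where the affine-pointed, quasi-compact (hence noetherian) hypotheses are doing real work, since without the $\cite{HNR}$ equivalence one only has a complex of sheaves and the weight decomposition of its cohomology, not a decomposition of the complex itself in a well-behaved derived category.
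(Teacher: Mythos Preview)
Your proposal is correct and follows essentially the same approach as the paper: use the \cite{HNR} equivalence $D^+(\qc(\cG))\simeq\qcoh^+(\cG)$ (valid since $\cG$ inherits quasi-compactness and affine-pointedness from $B$), pass to the fixed-part subcategory $D^+(\qc(\cG)^\f)$, and descend $\psi^\f$ along the equivalence $D^+(\qc(\cG)^\f)\simeq D^+(\qc(B))\simeq\qcoh^+(B)$. The paper's proof is terser and phrases the middle step as projecting $\psi$ onto its fixed component in the product decomposition $D^+(\qc(\cG))\simeq D^+(\qc(\cG)^\f)\times D^+(\qc(\cG)^\m)$, but your more explicit verifications (perfectness of $F^\bullet_B$, the POT conditions, compatibility with the local $\psi_U$) fill in exactly what the paper leaves implicit.
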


\begin{proof}
It is easy to check the condition of being quasi-compact and affine pointed
holds for $B$ if and only if it holds for $\cG$.

As now 
$
D^+(\mathrm{QCoh}(\cG)) \simeq \qcoh^+(\cG),
$ we view $\psi$ as a map in $D^+(\mathrm{QCoh}(\cG))$. Under the equivalence
$
D^+(\mathrm{QCoh}(\cG)) \simeq D^+(\mathrm{QCoh}(\cG)^\f) \times 
D^+(\mathrm{QCoh}(\cG)^\m),
$
the component in $D^+(\mathrm{QCoh}(\cG)^\f)$ of $\psi$ is $\psi^\f$, and $\psi_B$ is obtained from $\psi^\f$
under the equivalences $D^+(\mathrm{QCoh}(\cG)^\f) \simeq D^+(\mathrm{QCoh(B)}) \simeq \qcoh^+(B)$.
\end{proof}

\begin{rmk}
For gerbes banded by the cyclic group $\mu_r$ with POTs, their POTs have no $h^1$ and we can also decompose them into moving and fixed parts 
with weights in $\bZ/r\bZ$, then all results above hold.
\end{rmk}

\subsection{}

Let $X$ be a smooth projective 3-fold over $\bC$, the moduli stack $\cG$ of perfect complexes which are simple and 
without higher automorphisms 
\footnote{
Negative self $\mathrm{Ext}$ groups are zero.
}
is a
$\gm$-gerbe locally of finite type (\cite[Corollary 4.3.3]{Li1}),
and has a derived enhancement (\cite[Definition 5.1]{STV}) , which has a (-1)-symplectic  structure (\cite{PTVV}) when $X$ is Calabi-Yau. These derived enhancements induce obstruction theories (\cite[Proposition 1.2]{STV}).
Alternatively, obstruction theories can be constructed as in \cite{CL} using the deformation-obstruction results in \cite{HT}.  If 
truncated obstruction theories are perfect, results in this section apply, and hopefully complement the perspectives in existing literature, e.g., \cite{CL, PR, Th}.

To work with open substacks of $\cG$ which are of finite type, we need to fix some numerical invariants and a stability condition. (See e.g.,\cite{BLMNPS}.) To define DT type invariants, we need to remove $\gm$ from the automorphism group of stable objects, which can be achieved by fixing the determinant of complexes. There are two choices to fix the determinant as  discussed below. Using the results we obtained, it is easy to see both choices produce the same semi-perfect obstruction theory.

Denote $\perf(X)$ the stack of perfect complexes on $X$ (\cite{STV, TV}),
$\Pic(X)$ the Picard stack of $X$, $[\Pic(X)/B\gm]$
\footnote{
See \cite[C.3]{AGV} for viewing rigidification as taking quotient.
}
 the Picard scheme of $X$, and let $L$ be a line bundle on $X$.
 Note that $\cG$ is open in $\perf(X)$.
Consider the cartesian diagrams
\[
\xymatrix{
\cG_L        \ar[r]\ar[d]        &  \spec \bC             \ar[d]    &\\
 \cG_L'          \ar[r]  \ar[d]  &B\gm    \ar[r] \ar[d]     & \spec \bC\ar[d]\\
\cG \ar[r] & \Pic(X)  \ar[r]  &  [\Pic(X)/B\gm],
} 
\] where $\cG_L$ 
and $\cG_L'$ denote fiber products, the vertical arrows from $\spec \bC$ are determined by $L$, and $\cG \to \Pic(X)$ induced by the perfect determinant morphism (\cite[Definition 3.1]{STV}).  Note that $\cG_L$ is a gerbe banded by cyclic groups and 
 $\cG_L'$ a $\gm$-gerbe, and
 the map $\cG_L \to \cG_L'$ induces an isomorphism after rigidification.
  
  The obstruction theory for $\cG \to \Pic(X)$ comes from the tangent complex of
$\bR\perf(X) \to \bR\Pic(X)$(\cite[Proposition 3.2]{STV}). By base change, we obtain obstruction theories for $\cG_L \to \spec \bC$
and $\cG_L' \to B\gm$. From the obstruction theory of $\cG_L' \to B\gm$, we obtain an obstruction theory of $\cG_L'$(\cite[Appendix B]{GP}). As the tangent complex of $B\gm$ is perfect in degree $-1$, the obstruction theory for
$\cG_L$ is perfect if and only if the obstruction theory for $\cG_L'$ is, and in that case, the induced semi-perfect obstruction theories on
their rigidifications are identical. In fact, the truncation of $\phi$ to $\psi$ in the beginning of the section reverses the process of obtaining the POT for $\cG_L'$ from $\cG_L' \to B\gm$.

\end{document}